\tikzset{
>=stealth',
help lines/.style={dashed, thick}, axis/.style={<->}, important
line/.style={thick}, connection/.style={thick, dotted}, }
\newtheorem{lemma}{Lemma}[section]
\newtheorem{proposition}[lemma]{Proposition}
\newtheorem{remark}[lemma]{Remark}
\newtheorem{theorem}[lemma]{Theorem}
\newtheorem{corollary}[lemma]{Corollary}
\newtheorem{ques}[lemma]{Question}
\newtheorem*{remark*}{Remark}
\begin{document}
\title {A remark on projections of the rotated cube to complex lines}
\author{Efim D. Gluskin and Yaron Ostrover}
\date{}
\maketitle
\begin{abstract}  
Motivated by relations with a symplectic invariant known as the ``cylindrical symplectic capacity", in this note we study the expectation of the area of a minimal projection to a complex line for a randomly rotated cube.
\end{abstract}

\section{Introduction and Result} \label{sec:Int}

Consider the complex vector space ${\mathbb C}^n$ with coordinates $z = (z_1,\ldots,z_n)$, and equipped with its standard Hermitian structure $\langle z , w \rangle_{\mathbb C} = \sum_{j=1}^n z_j \overline w_j$. 
By writing $z_j = x_j + iy_j$, we can look at ${\mathbb C}^n$ as a real $2n$-dimensional vector space ${\mathbb C}^n \simeq {\mathbb R}^{2n} = {\mathbb R}^n \oplus {\mathbb R}^n$  equipped with the usual complex structure $J$, i.e., $J$ is the linear map $J : {\mathbb R}^{2n} \rightarrow {\mathbb R}^{2n}$  given by $J(x_j,y_j) = (-y_j,x_j)$. 
Moreover, note that the real part  of the Hermitian inner product $\langle \cdot , \cdot \rangle_{\mathbb C}$ is just the standard inner product on ${\mathbb R}^{2n}$, and the imaginary part is the standard symplectic structure on ${\mathbb R}^{2n}$.
As usual, we denote the orthogonal and symplectic groups associated with these two structures by ${\rm O}(2n)$ and ${\rm Sp}(2n)$, respectively. It is well known that ${\rm O}(2n) \cap {\rm Sp}(2n) = {\rm U}(n)$, where the unitary group $ {\rm U}(n)$ is the subgroup of ${\rm GL}(n,{\mathbb C})$ that preserves the above Hermitian inner product. 

Symplectic capacities on ${\mathbb R}^{2n}$ are  numerical invariants which associate with every open set ${\mathcal U} \subseteq {\mathbb R}^{2n}$ a number $c({\mathcal U}) \in [0,\infty]$. This number, roughly speaking, measures the symplectic size of the set ${\mathcal U}$ (see e.g.~\cite{CHLS}, for a survey on symplectic capacities). We refer the reader to the appendix of this paper for more information regarding symplectic capacities, 
and their role as an incentive for the current paper. 
Recently, the authors observed (see Theorem 1.8 in~\cite{GO}) that for symmetric convex domains in ${\mathbb R}^{2n}$,  a certain symplectic capacity $\overline c$, which is the largest possible normalized symplectic capacity and is known as the ``cylindrical capacity", is asymptotically  equivalent to its linearized version given by 
\begin{equation} \label{def-linearized-cilindrical-capacity} \overline c_{_{{\rm Sp}(2n)}} ({\mathcal U})  = \inf_{S \in {\rm Isp}(2n)} {\rm Area} \bigl (\pi( S( {\mathcal U})) \bigr ). \end{equation}
Here, $\pi$ is the orthogonal projection to the complex line $E = \{ z \in {\mathbb C}^n \, | \,  z_j = 0 \ {\rm for} \ j \neq 1  \}$, and the infimum is taken over all $S$ in the affine symplectic group ${\rm ISp}(2n) ={\rm Sp}(2n) \ltimes {\rm T}(2n)$, which is the semi-direct product   of the linear symplectic
group and the group of translations  in ${\mathbb R}^{2n}$. We remark that in what follows we consider only centrally symmetric convex bodies in ${\mathbb R}^{2n}$, and hence one can take $S$ in~$(\ref{def-linearized-cilindrical-capacity})$ to be a genuine symplectic matrix (i.e., $S \in {\rm Sp}(2n)$).

An interesting natural variation of the quantity  $ \overline c_{_{{\rm Sp}(2n)}}$, which serves as an upper bound to it and is of independent interest, is obtained by restricting the infimum on the right-hand side of~$(\ref{def-linearized-cilindrical-capacity})$ to the unitary group ${\rm U}(n)$ (see the Appendix for more details). More preciesly, let $L \subset {\mathbb R}^{2n}$ be a complex line, i.e., $L={\rm span}\{v,Jv\}$ for some non-zero vector $v \in {\mathbb R}^{2n}$, and denote by $\pi_L$  the orthogonal projection to the subspace $L$.  For a symmetric convex body $K \subset {\mathbb R}^{2n}$, the quantity of interest is
\begin{equation} \label{def-linearized-cilindrical-capacity-unitary}  \overline c_{_{{\rm U}(n)}} (K)  :=  \inf_{U \in {\rm U}(n)} {\rm Area} \bigl (\pi( U( {K})) \bigr ) =  \inf \Bigl \{ {\rm Area} \bigl (\pi_L( K) \bigr ) \ |  \  L \subset {\mathbb R}^{2n} \ {\rm is \ a \ complex \ line  }  \Bigr \}.   \end{equation}

In this note we focus on understanding $ \overline c_{_{{\rm U}(n)}}(OQ)$, where $O \in {\rm O}(2n)$ is a random orthogonal transformation, and $Q = [-1,1]^{2n} \subseteq {\mathbb R}^{2n}$ is the standard cube. 
We remark that in~\cite{GO} it was shown that, in contrast with projections to arbitrary two-dimensional subspaces of ${\mathbb R}^{2n}$,  there exist an orthogonal transformation $ O \in {\rm O}(2n)$ such that for every complex line $L \subset {\mathbb R}^{2n}$ one has that
${\rm Area}(\pi_L( O Q)) \geq \sqrt{n/2}$. Here we study the expectation of $\overline c_{_{{\rm U}(n)}}(OQ)$ with respect to the Haar measure on the orthogonal group ${\rm O}(2n)$.

The main result of this note is the following: 

\begin{theorem} \label{main-thm1}
There exist universal constants $C,c_1,c_2>0$ such that 
$$ \mu \left \{ O \in {\rm O}(2n) \, | \, \exists  {\rm  \ a \ complex \ line \ } L \subset {\mathbb R}^{2n} \ {\rm with} \ {\rm diam}(\pi_L(OQ)) \leq c_1 \sqrt{n} \right \} \leq C{\rm exp}(-c_2 n),$$
where $\mu$ is the unique normalized Haar measure on ${\rm O}(2n)$. 
\end{theorem}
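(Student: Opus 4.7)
The plan is to bound $\mathrm{diam}(\pi_L(OQ))$ from below via an averaging identity, apply concentration of measure on $O(2n)$, and finally take a union bound over a net of complex lines in $\mathbb{C}P^{n-1}$.

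First, since the support function of $Q = [-1,1]^{2n}$ is the $\ell_1$-norm, one has $\mathrm{diam}(\pi_L(OQ)) = 2 \sup_{u \in L \cap S^{2n-1}} \|O^T u\|_1$. Parametrising a unit vector $u \in L = \mathrm{span}_{\mathbb R}\{v, Jv\}$ as $u_\theta = \cos\theta \cdot v + \sin\theta \cdot Jv$ and averaging in $\theta \in [0, 2\pi]$ gives the lower bound
\[
\sup_\theta \|O^T u_\theta\|_1 \;\geq\; \frac{1}{2\pi}\int_0^{2\pi}\|O^T u_\theta\|_1 \, d\theta \;=\; \frac{2}{\pi}\sum_{j=1}^{2n}\sqrt{(O^T v)_j^2 + (O^T J v)_j^2}.
\]
Denote the sum on the right by $Z_L(O)$; a direct check shows that $Z_L(O)$ is independent of the choice of orthonormal pair $\{v, Jv\}$ spanning $L$, so it is well defined as a function on pairs (complex line, orthogonal matrix).

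Next, I would prove the expectation estimate $\mathbb{E}_O Z_L(O) \geq c_0 \sqrt{n}$ uniformly in $L$, for some universal $c_0 > 0$. Under Haar $O$, the pair $(O^T v, O^T J v)$ is a uniformly distributed orthonormal $2$-frame in $\mathbb{R}^{2n}$; in particular, given $O^T v = u$, the vector $O^T J v$ is uniform on the unit sphere in $u^\perp$. Since $Z_L(O) \geq \|O^T J v\|_1$ and the $\ell_1$-norm of a random unit vector in any $(2n-2)$-dimensional subspace of $\mathbb{R}^{2n}$ has expectation of order $\sqrt{n}$, the expectation bound follows. The function $O \mapsto Z_L(O)$ is $O(\sqrt{n})$-Lipschitz with respect to the Hilbert-Schmidt metric on $O(2n)$, so the Gromov-Milman concentration inequality yields, for each fixed $L$,
\[
\mu\bigl\{O : Z_L(O) < c_0 \sqrt{n}/2\bigr\} \;\leq\; C \exp(-c_3 n)
\]
with a universal $c_3 > 0$.

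To pass from a single $L$ to all complex lines simultaneously, I would use that the Grassmannian of complex lines in $\mathbb{C}^n$ is $\mathbb{C}P^{n-1}$ of real dimension $2(n-1)$, which admits $\epsilon$-nets of cardinality at most $(C_0/\epsilon)^{2(n-1)}$. For fixed $O$, the map $L \mapsto Z_L(O)$ is also $O(\sqrt{n})$-Lipschitz in $L$, so choosing $\epsilon$ a suitably small universal constant makes the Lipschitz error at most $c_0\sqrt{n}/8$; a union bound over the net combined with the concentration estimate then yields the exponentially small bound on the bad event. Combining with the averaging lower bound gives $\mathrm{diam}(\pi_L(OQ)) \geq (4/\pi) Z_L(O) \geq c_1 \sqrt{n}$ uniformly in $L$ with probability $\geq 1 - C\exp(-c_2 n)$. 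The main obstacle is the quantitative matching of constants: the concentration exponent $c_3$ (which scales as $c_0^2$ via the squared deviation) must dominate the rate $2 \log(C_0/\epsilon)$ coming from the net. This can be arranged by sharpening the expectation bound $c_0$, or by working via the Gaussian model (iid Gaussian entries), where the coordinates $\sqrt{X_j^2 + Y_j^2}$ are iid Rayleigh random variables and the required concentration is direct.
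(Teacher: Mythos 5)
Your averaging step is a nice observation: the identity $\mathrm{diam}(\pi_L(OQ)) = 2\sup_{u\in L\cap S^{2n-1}}\|O^Tu\|_1$ together with the circle average gives the clean lower bound $(4/\pi)\,Z_L(O)$, where $Z_L(O)=\sum_j\sqrt{(O^Tv)_j^2+(O^TJv)_j^2}$, and your observation that $Z_L$ depends only on $L$ and not on the choice of $v$ is correct. The expectation estimate $\mathbb{E}\,Z_L(O)\asymp\sqrt{n}$ and the $O(\sqrt{n})$-Lipschitz bound in $O$ are also fine, and they yield the single-$L$ concentration bound you state.

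The genuine gap is the union bound over an $\varepsilon$-net of $\mathbb{C}P^{n-1}$, and it cannot be repaired in the way you indicate. The map $L\mapsto Z_L(O)$ has Lipschitz constant of order $\sqrt{n}$, so to keep the net error below $c_0\sqrt{n}/8$ you must take $\varepsilon$ to be a \emph{fixed} universal constant (not shrinking with $n$). An $\varepsilon$-net of $\mathbb{C}P^{n-1}$ then has cardinality $\exp\bigl(2(n-1)\log(C_0/\varepsilon)\bigr)$, i.e. $\exp(\alpha n)$ with $\alpha=2\log(C_0/\varepsilon)$. On the other side, the concentration exponent is at most of order $\mathbb{E}[Z_L]^2/\mathrm{Lip}^2\asymp 1$ per unit of $n$: since $\mathbb{E}\,Z_L\approx\sqrt{\pi n}$ and the Lipschitz constant is $\approx 2\sqrt{n}$, the best attainable exponent is roughly $\kappa\pi/4\cdot n$ with $\kappa$ the universal concentration constant for $\mathrm{O}(2n)$. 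Plugging in even generous constants gives a concentration exponent well below $1\cdot n$, while the net contributes $\alpha\geq 2\log(C_0/\varepsilon)>2$. So the product $(\#\mathcal{N})\cdot\sup_L\mu\{Z_L<\cdot\}$ \emph{grows} exponentially and the union bound fails. Neither of your proposed fixes changes this: the expectation constant $c_0$ is pinned at $\sqrt{\pi}$ by the Rayleigh computation, and passing to the Gaussian model only changes the universal concentration constant by a bounded factor, which does not close a gap that is off by more than an order of magnitude.

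The paper's argument sidesteps exactly this obstruction. Instead of netting the whole $(2n-2)$-dimensional family of complex lines, it parametrizes by $z=O^*e\in S^{2n-1}$ and observes that on the bad event $z$ must lie in the sparse set $G_\lambda=S^{2n-1}\cap\lambda\sqrt{n}\,B_1^{2n}$. Crucially, $G_\lambda$ admits a Ka\v{s}in-type net (Proposition~\ref{e-net-prop}) of cardinality $\exp(\varepsilon n)$ where $\varepsilon$ can be chosen as an \emph{arbitrarily small} universal constant, at the price of a moderate mesh $\asymp\lambda\sqrt{\log(1/\varepsilon)/\varepsilon}$. Since $\varepsilon$ can then be taken smaller than the concentration exponent $\kappa^2/16$, the union bound closes. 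That exploitation of sparsity (net the small set $G_\lambda$, not the Grassmannian) is the missing idea in your proposal; without it, the count-vs-concentration comparison fails for any choice of universal constants.
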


Note that for any rotation $U \in {\rm O}(2n)$, the image $UQ$ contains the Euclidean unit ball and hence for every complex line $L$ one has 
${\rm Area} (\pi_L UQ)  \geq   {\rm diam} (\pi_L UQ)$. 
An immediate corollary from this observation, Theorem~\ref{main-thm1}, and the easily verified fact that 
for every $O \in {\rm O}(2n)$, the complex line $L' := {\rm Span} \{v,J v\}$, where $ v$ is the direction where the minimal-width of $OQ$ is obtained, satisfies ${\rm Area}(\pi_{L'}(OQ)) \leq 4\sqrt{2n}$, is that 
\begin{corollary} \label{Cor-rot-cube}
With the above notations one has
\begin{equation} \label{expectation-rotated-cube}
{\mathbb E}_{\mu} \left ( \overline c_{_{{\rm U}(n)}}(OQ) \right ) \asymp \sqrt{n},
\end{equation}  
where ${\mathbb E}_{\mu}$ stands for the expectation with respect to the Haar measure $\mu$ on ${\rm O}(2n)$, and the symbol $\asymp$ means equality up to universal multiplicative constants.
\end{corollary}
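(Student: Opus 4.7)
The plan is to sandwich $\mathbb{E}_\mu[\overline c_{{\rm U}(n)}(OQ)]$ between universal constant multiples of $\sqrt{n}$, with both bounds following formally from ingredients already stated in the excerpt.

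For the upper bound, I will use the pointwise estimate recalled in the lead-in paragraph: for every $O \in {\rm O}(2n)$ the complex line $L' = {\rm Span}\{v, Jv\}$ built from a minimal-width direction $v$ of $OQ$ satisfies $\mathrm{Area}(\pi_{L'}(OQ)) \leq 4\sqrt{2n}$. The justification is that $\pi_{L'}(OQ)$ inherits the widths of $OQ$ in the directions $v$ and $Jv$: the first equals $2$ (the minimum of $2\|O^T u\|_1$ over $u \in S^{2n-1}$, attained when $O^T u$ is a standard basis vector), and the second is at most $2\sqrt{2n}$ by Cauchy--Schwarz applied to the unit vector $O^T Jv$. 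Hence $\pi_{L'}(OQ)$ fits inside a rectangle of sides $2\times 2\sqrt{2n}$. Since $\overline c_{{\rm U}(n)}(OQ)$ is the infimum over complex lines of $\mathrm{Area}(\pi_L(OQ))$, integrating in $O$ yields $\mathbb{E}_\mu[\overline c_{{\rm U}(n)}(OQ)] \leq 4\sqrt{2n}$.

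For the lower bound, let $E_n \subset {\rm O}(2n)$ be the event from Theorem~\ref{main-thm1}, so $\mu(E_n) \geq 1 - C\exp(-c_2 n)$. On $E_n$ every complex line $L$ satisfies $\mathrm{diam}(\pi_L(OQ)) \geq c_1\sqrt{n}$, and combined with the comparison $\mathrm{Area}(\pi_L(OQ)) \geq \mathrm{diam}(\pi_L(OQ))$ (noted just before the corollary, and arising from the inclusion $B^{2n} \subseteq OQ$) this gives $\overline c_{{\rm U}(n)}(OQ) \geq c_1\sqrt{n}$ on $E_n$. Therefore
\[
\mathbb{E}_\mu[\overline c_{{\rm U}(n)}(OQ)] \;\geq\; c_1\sqrt{n}\,\mu(E_n) \;\geq\; c_1\sqrt{n}\bigl(1 - C e^{-c_2 n}\bigr),
\]
which exceeds $(c_1/2)\sqrt{n}$ once $n$ passes a fixed threshold $n_0$ depending only on $C$ and $c_2$. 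For the finitely many smaller values $n \leq n_0$ I invoke the deterministic bound $\overline c_{{\rm U}(n)}(OQ) \geq \pi$, coming from the unit disk $\pi_L(B^{2n}) \subseteq \pi_L(OQ)$, which dominates $c''\sqrt{n}$ for an appropriately small universal constant $c'' > 0$ in that range.

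No step here presents a genuine obstacle: Corollary~\ref{Cor-rot-cube} is a direct formal consequence of Theorem~\ref{main-thm1} together with the two elementary pointwise inequalities just recalled. The only mild bookkeeping is the small-$n$ handoff, where the exponential concentration in Theorem~\ref{main-thm1} is not yet effective and one falls back on the trivial unit-ball lower bound.
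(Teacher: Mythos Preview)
Your proof is correct and follows exactly the approach the paper sketches: the upper bound via the minimal-width complex line giving $\mathrm{Area}(\pi_{L'}(OQ))\le 4\sqrt{2n}$ pointwise, and the lower bound by combining Theorem~\ref{main-thm1} with the inequality $\mathrm{Area}(\pi_L(OQ))\ge \mathrm{diam}(\pi_L(OQ))$ coming from the unit-ball inclusion. The paper states the corollary as immediate and does not spell out the expectation computation or the small-$n$ handoff, so your write-up is in fact slightly more detailed than what appears there.
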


\begin{remark} 
{\rm
We will see below that for every $O \in {\rm O}(2n)$, the quantity $\overline c_{_{{\rm U}(n)}}(OQ)$ is bounded from below by the diameter of the section of the $4n$-dimensional octahedron $B^{4n}_1$ by the subspace 
\begin{equation} \label{the-subspace} L_O = \{ (x,y) \in {\mathbb R}^{2n} \oplus {\mathbb R}^{2n} \ | \ y=O^*JOx \}. \end{equation}
This reduces the above problem of estimating ${\mathbb E}_{\mu} \left ( \overline c_{_{{\rm U}(n)}}(OQ) \right )$ to estimating the diameter of a random section of the octahedron $B^{4n}_1$  with respect to a probability measure $\nu$ on the real Grassmannian $G(4n,2n)$ induced by 
the map $O \mapsto L_O$ from the Haar measure $\mu$ on $O(2n)$.
By duality, the diameter of a section of the octahedron by a linear subspace is equal to the deviation
of the Euclidean ball from the orthogonal subspace with respect the $l_{\infty}$-norm. 
The right order of the minimal deviation from half-dimensional subspaces was found in the remarkable work of Ka\v{s}in~\cite{K}.
For this purpose, he introduced some special measure on the Grassmannian and proved that the approximation of the ball by random subspaces is almost optimal.
In his exposition lecture~\cite{Mit}, 
Mitjagin treated Kashin's work as a result about octahedron sections, which gave a more geometric intuition into it, and rather simplified the proof.   
At about the same time, the diameter of random (this time with respect to the classical Haar measure on the Grassmanian) sections of the octahedron, and more general convex bodies, was studied by Milman~\cite{Mil}; Figiel, Lindenstrauss and Milman~\cite{FLM}; Szarek\cite{S}, and many others with connection with Dvoretzky's theorem (see also~\cite{A-M,G-G, GMT, Gl,Mil1, P-T}, as well as Chapters 5 of~\cite{Pis} and Chapters 5 and 7 of~\cite{AAGM} for more details). 
It turns out that 
random sections of the octahedron $B^{4n}_1$, with respect to the measure $\nu$ on the real Grassmannian $G(4n,2n)$ mentioned above, also have almost optimal diameter. To prove this we use techniques which are now standard in the field. For completeness, all details will be given in Sections~\ref{sec-pre} and~\ref{sec-proof-main-thm} below. 
}
\end{remark}

\noindent {\bf Notations:} The letters $C,c,c_1,c_2, \ldots$  denote positive universal constants that take
 different values from one line to another.  Whenever we write $\alpha  \asymp \beta$, we mean that there exist universal constants $c_1, c_2 > 0$
such that $c_1\alpha \leq  \beta \leq  c_2 \alpha$. For a finite set $V$, denote by $\# V$ the number of elements in $V$. For $a \in {\mathbb R}$ let $[a]$ be its integer part. 
The standard Euclidean inner product and norm on ${\mathbb R}^n$ will be denoted by $\langle \cdot,\cdot \rangle$, and $| \cdot |$, respectively. 
The diameter of a subset $V \subset {\mathbb R}^n$ is denoted by ${\rm diam}(V) = \sup \{ |x-y| \, : \, x,y \in V \}$. For $1 \leq p \leq \infty$, we denote by $l_p^n$ the space ${\mathbb R}^n$ equipped with the norm $\| \cdot \|_p$ given by $\| x \|_p = ( \sum_{j=1}^n \|x_i|^p)^{1/p}$ (where $\|x \|_{\infty} = \max \{ |x_i| \, | \, i=1,\ldots,n\}$), and the unit ball of the space $l_p^n$ is denoted by $B_p^n = \{ x \in {\mathbb R}^n \, | \, \|x \|_p \leq 1 \}$.  
We denote by $S^n$ the unit sphere in ${\mathbb R}^{n+1}$, i.e., $S^n = \{ x \in {\mathbb R}^{n+1} \, | \, |x|^2\ = 1\}$, and by $\sigma_n$ the standard measure on $S^n$. Finally, for a measure space $(X,\mu)$ and a measurable function $\varphi : X \rightarrow {\mathbb R}$ we denote by ${\mathbb E}_\mu \varphi$ the expectation of $\varphi$ with respect to the measure $\mu$. 

\noindent{{\bf Acknowledgments:}} The authors would like to thank the anonymous referee for helpful comments and remarks, and in particular for his/her suggestion to 
elaborate more on the symplectic topology background which partially served as a motivation for the current note. 
%
The second-named author was partially supported by the European Research Council (ERC)
under the European Union's Horizon 2020 research and innovation programme, starting grant No. 637386, and by the ISF grant No. 1274/14.

 \section{Preliminaries} \label{sec-pre}
Here we recall some basic notations and results required for the proof of Theorem~\ref{main-thm1}. 
 
 Let $V$ be a subset of a metric space $(X,\rho)$, and let $\varepsilon >0$. A set ${\mathcal F} \subset V$ is called an $\varepsilon$-net for $V$ if for any $x \in V$ there exist $y \in {\mathcal F}$ such that $\rho(x,y) \leq \varepsilon$. 
It is a well known and easily verified  fact  that for any given set $G$ with $V \subseteq G$, if $\mathcal T$ is a finite $\varepsilon$-net for $G$, then there exists a $2\varepsilon$-net ${\mathcal F}$ of $V$ with 
$ \# {\mathcal F} \leq \# {\mathcal T}$. 

\begin{remark} {\rm From now on, unless stated otherwise, all nets are assumed to be taken with respect to the standard Euclidean metric on the relevant space.}
\end{remark}

Next, fix $n \in {\mathbb N}$ and $0< \theta <1$. We denote by $G^n_{\theta}$ the set $G^n_{\theta} := S^{n-1} \cap \theta \sqrt{n} B_1^n$. 
The following proposition goes back to 
 Ka\v{s}in~\cite{K}. 
The proof below follows Makovoz~\cite{Ma} (cf.~\cite{Sc} and the references therein).
\begin{proposition} \label{e-net-prop}
For every 
$\varepsilon$ such that $8 {\frac {\ln n} n} < \varepsilon < {\frac 1 2}$, there exists a set ${\mathcal T} \subset G^n_{\theta}$ such that $\# {\mathcal T} \leq {\rm exp}({\varepsilon n})$, and which is a $8\theta \sqrt{{\frac {\ln(1/\varepsilon)} {\epsilon} }}$-net for $G^n_{\theta} $.
\end{proposition}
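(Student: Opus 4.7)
My plan is to construct a finite $\ell_2$-cover of $G_\theta^n$ by a Maurey-type empirical argument, and then apply the elementary observation recalled in the preliminaries to turn the cover into an honest net inside $G_\theta^n$. The idea is to approximate each $x\in G_\theta^n$ by an average of $k$ signed basis vectors chosen by a random sampling scheme, enumerate the finitely many such averages, and tune $k$ to balance the resulting count against the approximation error.

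Let $x=\sum a_i e_i\in G_\theta^n$ and set $s^*:=\theta\sqrt{n}$. Consider the random vector $Z$ taking the value $s^*\,\mathrm{sgn}(a_i)\,e_i$ with probability $|a_i|/s^*$ (for each $i$) and the value $0$ with the remaining probability $1-\|x\|_1/s^*\geq 0$. A direct computation gives $\mathbb{E}\,Z=x$ and $\mathbb{E}\|Z\|_2^2=s^*\|x\|_1\leq (s^*)^2$, so for $k$ i.i.d.\ copies $Z_1,\ldots,Z_k$,
\[ \mathbb{E}\,\Bigl\|\tfrac{1}{k}\textstyle\sum_{j=1}^k Z_j - x\Bigr\|_2^2 \ \leq\ \tfrac{(s^*)^2}{k}\ =\ \tfrac{\theta^2 n}{k}, \]
whence there exist $\sigma_j\in\{0,\pm 1\}$ and $i_j\in[n]$ such that $y(\sigma,i):=(s^*/k)\sum_{j=1}^k \sigma_j e_{i_j}$ lies within $\theta\sqrt{n/k}$ of $x$ in $\ell_2$. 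The finite collection $\mathcal G$ of all such $y(\sigma,i)$ therefore covers $G_\theta^n$ in $\ell_2$ with radius $\theta\sqrt{n/k}$, and since $y(\sigma,i)$ depends on $(\sigma,i)$ only through the multiset of $k$ elements drawn from $\{0,\pm e_1,\ldots,\pm e_n\}$,
\[ |\mathcal G|\ \leq\ \binom{2n+k}{k}\ \leq\ (3en/k)^k\qquad(k\leq n). \]

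Take $k:=\lfloor c_0\,\varepsilon n/\ln(1/\varepsilon)\rfloor$ with a small absolute constant $c_0>0$; for definiteness $c_0=1/8$ works. The hypothesis $\varepsilon>8\ln n/n$ yields $\varepsilon n>8\ln n\geq 8\ln(1/\varepsilon)$, so that $k\geq 1$. Writing $\ln(3en/k)=\ln(1/\varepsilon)+\ln(3e\ln(1/\varepsilon)/c_0)$ and absorbing the lower-order $\ln\ln(1/\varepsilon)$ and $\ln(1/c_0)$ terms using the hypothesis $\ln n\leq\varepsilon n/8$, one verifies that $k\ln(3en/k)\leq\varepsilon n$ holds uniformly on $8\ln n/n<\varepsilon<1/2$, hence $|\mathcal G|\leq e^{\varepsilon n}$. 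Simultaneously, the cover radius satisfies $\theta\sqrt{n/k}\leq c_0^{-1/2}\theta\sqrt{\ln(1/\varepsilon)/\varepsilon}\leq 4\theta\sqrt{\ln(1/\varepsilon)/\varepsilon}$.

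Finally, applying the observation from the preliminaries---any finite cover of $V$ at radius $\eta$ yields a $2\eta$-net of $V$ contained in $V$ of at most the same cardinality---to $V=G_\theta^n$ and the cover $\mathcal G$ produces the required $\mathcal T\subset G_\theta^n$ with $|\mathcal T|\leq|\mathcal G|\leq e^{\varepsilon n}$ that is an $8\theta\sqrt{\ln(1/\varepsilon)/\varepsilon}$-net for $G_\theta^n$. The main technical point is the final simultaneous calibration of $c_0$: the count forces $c_0$ to be small, the radius (after the factor-of-two loss from passing to a net inside the set) forces $c_0$ to be bounded below, and at the lower end $\varepsilon\sim 8\ln n/n$ one also needs $c_0\geq 1/8$ merely to guarantee $k\geq 1$. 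The sharper multiset count $\binom{2n+k}{k}\leq(3en/k)^k$, rather than the crude sequential $(2n+1)^k$, is what turns the $\ln n$ arising in a naive count into the $\ln(1/\varepsilon)$ in the final error bound.
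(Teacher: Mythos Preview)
Your proof is correct and follows the same overall architecture as the paper: construct a $\theta\sqrt{n/k}$-cover of $G_\theta^n$ by points of the form $(\theta\sqrt{n}/k)\,v$ with $v\in\mathbb{Z}^n\cap kB_1^n$, bound its cardinality by a quantity of order $(Cen/k)^k$, pass to a net inside $G_\theta^n$ via the preliminary observation, and calibrate $k\asymp \varepsilon n/\ln(1/\varepsilon)$.

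The one genuine methodological difference is in how the covering radius is established. The paper proves this via a deterministic coordinate-rounding lemma (its Lemma~2.3): for $x\in kB_1^n$, set $y_j=[|x_j|]\,\mathrm{sgn}(x_j)$ and compute $|x-y|^2\le\sum_j|x_j|\le k$. You instead use Maurey's empirical method, sampling signed basis vectors with probabilities proportional to $|a_i|$ and invoking the variance bound. Both arguments produce exactly the same set of approximants and the same $\sqrt{k}$ radius; the Maurey route is perhaps more conceptual and generalizes readily to other bodies, while the rounding argument is shorter and entirely elementary. The cardinality bounds ($\binom{2n+k}{k}$ via multisets versus $2^k\binom{n+k}{k}$ via sign-and-support) and the final constant-chasing are essentially interchangeable.
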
 

For the proof of Proposition~\ref{e-net-prop} we shall need the following lemma.
\begin{lemma} \label{lemma1}
For $k,n \in {\mathbb N}$, the set ${\mathcal F}_{k,n} :={\mathbb Z}^n \cap k B_1^n$ is a $\sqrt{k}$-net for the set $k B^n_1$,
and  \begin{equation} \label{bound-e-net} \# {\mathcal F}_{k,n} \leq (2e(1+ n/ k)) )^k. \end{equation} 
\end{lemma}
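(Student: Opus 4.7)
The plan is to build the net by truncating each coordinate toward zero, and to bound $\#\mathcal{F}_{k,n}$ by a stars-and-bars count of the absolute-value vector together with a crude sign count.

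For the net part, given $x \in kB_1^n$ I would define $y \in \mathbb{Z}^n$ coordinatewise by $y_i := \mathrm{sgn}(x_i)\lfloor |x_i|\rfloor$. Truncating toward zero ensures $|y_i| \leq |x_i|$, so automatically $\|y\|_1 \leq \|x\|_1 \leq k$, and hence $y \in \mathcal{F}_{k,n}$. Each fractional part satisfies $|x_i - y_i| = |x_i| - |y_i| \in [0,1)$, and therefore $(x_i - y_i)^2 \leq |x_i - y_i|$. Summing over $i$ gives
\[
|x-y|^2 \;\leq\; \sum_{i=1}^n |x_i - y_i| \;=\; \|x\|_1 - \|y\|_1 \;\leq\; k,
\]
so $|x-y| \leq \sqrt k$, as required.

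For the cardinality bound, I would parametrize each point $y \in \mathcal{F}_{k,n}$ by the pair consisting of the absolute-value vector $(|y_1|,\ldots,|y_n|)$ and a sign pattern on its nonzero coordinates. Introducing a slack variable $z := k - \|y\|_1 \in \mathbb{Z}_{\geq 0}$, the possible absolute-value vectors inject into the set of nonnegative integer solutions of $a_1 + \cdots + a_n + z = k$, which has cardinality $\binom{n+k}{k}$. At most $k$ coordinates of $y$ can be nonzero (each contributes at least $1$ to $\|y\|_1$), so for each absolute-value vector the number of compatible sign patterns is at most $2^k$. Combining this with the standard estimate $\binom{n+k}{k} \leq (e(n+k)/k)^k$ yields the asserted bound $(2e(1+n/k))^k$.

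There is no serious obstacle. The only point that deserves a comment is the choice of rounding: rounding to the nearest integer would give a slightly better $\ell_2$-approximation but could push the image out of $kB_1^n$, whereas truncation toward zero preserves the $\ell_1$-constraint at the mild cost of a factor $\sqrt 2$, which does not affect the statement.
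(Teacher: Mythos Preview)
Your proof is correct and follows essentially the same approach as the paper: the same truncation-toward-zero map for the net part, and the same $2^k\binom{n+k}{k}$ bound via a slack variable plus sign count for the cardinality. The only cosmetic difference is that you bound $(x_i-y_i)^2$ by $|x_i-y_i|$ and sum to $\|x\|_1-\|y\|_1$, whereas the paper bounds it by $|x_i|$ and sums to $\|x\|_1$; both give $\leq k$.
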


\begin{proof}[ {\bf Proof of Lemma~\ref{lemma1}}]
Let $x = (x_1,\ldots,x_n) \in kB_1^n$, and set $y_j = [|x_j|]\cdot {\rm sgn}(x_j)$, for $1\leq j \leq n$.
Note that $y=(y_1,\ldots,y_n) \in {\mathcal F}_{k,n} $, and $|x_j-y_j| \leq \min \{1,|x_j|\}$ for any $1 \leq j \leq n$. Thus, $|x -y|^2 = \sum_{j=1}^n |x_j-y_j|^2 \leq  \sum_{j=1}^n |x_j| = k$. This shows that ${\mathcal F}_{k,n}$ is a $\sqrt{k}$-net for $k B^n_1$.
 In order to prove the bound~$(\ref{bound-e-net})$ for the cardinality of ${\mathcal F}_{k,n} $, note that by definition
\begin{eqnarray*} \#{\mathcal F}_{k,n}  & = &  \# \{v \in {\mathbb Z}^n \, | \, \sum_{i=1}^n |v_i| \leq k \} \leq 2^k \# \{  v \in {\mathbb Z}_{+}^{n+1} \, | \, \sum_{i=1}^{n+1} v_i = k \}  \\ & = & 2^k \binom{n+k}{k} \leq 2^k  \Bigl ({\frac {e(n+k)} {k} } \Bigr)^k. \end{eqnarray*}
This completes the proof of the lemma.
\end{proof}

\begin{proof}[ {\bf Proof of Proposition~\ref{e-net-prop}}]
We assume $n>1$ (the case $n=1$ can be checked directly). Set $k=[{\frac {\varepsilon n} {8 \ln (1/\varepsilon)}}]$. Note that since $\varepsilon > 8 {\frac {\ln n} {n} }$, one has that $k \geq 1$.
From Lemma~\ref{lemma1} it follows that  $\theta {{\frac {\sqrt n} {k} }}  {\mathcal F}_{k,n}$ is a $\theta {{\frac {\sqrt n} {k} }} $-net for $\theta \sqrt{n} B_1^n$. From the remark in the beginning of this section and Lemma~\ref{lemma1} we conclude that there is a set ${\mathcal T} \subset G^n_{\theta} \subset \theta \sqrt{n} B_1^n$ which is a $2\theta \sqrt{{\frac {n} {k} }} $-net for $G^n_{\theta}$, and moreover,
\begin{equation*}
\# {\mathcal T} \leq \# {\mathcal F}_{k,n} \leq  \bigl(2 e(1+ n/ k))  \bigr)^k.
\end{equation*}
Finally, from our choice of $\varepsilon$ it follows that $k \geq {\frac {\varepsilon n} {16 \ln (1/\varepsilon)} }$, and hence $2\theta\sqrt{{\frac n k}} \leq 8\theta \sqrt{{\frac {\ln (1/ \varepsilon)} {\varepsilon}}} $, and moreover that $ \bigl (2e (1+n/k) \bigr)^{k/n} \leq e^{\varepsilon}$. This completes the proof of the proposition.  
\end{proof}

We conclude this section with the following well-known result regarding concentration of measure for Lipschitz functions on the sphere (see, e.g.,~\cite{MS}, Section 2 and Appendix V).
\begin{proposition}  \label{prop-about-concentration} Let $f : S^{n-1} \rightarrow {\mathbb R}$ be an $L$-Lipschitz function and set ${\mathbb E}f= \int_{S^{n-1}} f d \sigma_{n-1}$, where $\sigma_{n-1}$ is the standard measure on $S^{n-1}$. Then, $$ \sigma_{n-1} \left ( \{ x \in S^{n-1} \, | \, |f(x)-{\mathbb E}f| \geq t \} \right ) \leq C {\rm exp}(-\kappa t^2n/L^2), $$
where $C,\kappa>0$ are some universal constants.
\end{proposition}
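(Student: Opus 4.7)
The plan is to derive the concentration inequality from Lévy's isoperimetric inequality on the sphere, via the standard two-step argument: first concentrate around a median of $f$, then show that the median and the mean are within $O(L/\sqrt{n})$ of each other.

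First I would recall the isoperimetric inequality on $S^{n-1}$: among all Borel subsets of prescribed measure, spherical caps minimize the measure of the geodesic $t$-neighborhood $A_t = \{x \in S^{n-1} \,:\, \rho(x,A) \leq t\}$, where $\rho$ denotes the geodesic distance. Comparing with a hemisphere then yields the quantitative statement that, for any $A \subset S^{n-1}$ with $\sigma_{n-1}(A) \geq 1/2$,
$$ \sigma_{n-1}(S^{n-1} \setminus A_t) \leq C \exp(-\kappa t^2 n) $$
for universal constants $C,\kappa>0$.

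Next, let $M$ be a median of $f$ and set $A = \{x \in S^{n-1} \,:\, f(x) \leq M\}$, so that $\sigma_{n-1}(A) \geq 1/2$. Since $f$ is $L$-Lipschitz with respect to the Euclidean metric (which is bounded above by the geodesic metric), every $x$ with $\rho(x,A) \leq t$ satisfies $f(x) \leq M + Lt$. Applying the same reasoning to $\{f \geq M\}$ and combining the two estimates gives
$$ \sigma_{n-1}(\{x \in S^{n-1} \,:\, |f(x)-M| \geq t\}) \leq 2C \exp(-\kappa t^2 n / L^2). $$

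To replace $M$ by $\mathbb{E}f$, I would integrate the tail bound just obtained:
$$ |\mathbb{E}f - M| \leq \int_0^\infty \sigma_{n-1}(\{|f-M| \geq t\})\, dt \leq C' \frac{L}{\sqrt{n}}. $$
Plugging this into the previous inequality via the triangle inequality and absorbing the resulting shift into the universal constants (which only worsens $\kappa$) yields the stated concentration around the mean. The only conceptually nontrivial ingredient is the spherical isoperimetric inequality itself; the rest is routine bookkeeping, with the main subtlety being to verify that the Euclidean Lipschitz constant $L$ also controls $f$ with respect to the geodesic metric up to a universal factor, which merely alters the constant $\kappa$.
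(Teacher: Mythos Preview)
Your argument is correct and is exactly the standard proof via L\'evy's isoperimetric inequality (concentration around a median, then median-to-mean comparison). The paper, however, does not prove this proposition at all: it simply states it as a well-known result and refers the reader to~\cite{MS}, Section~2 and Appendix~V, where precisely the argument you outline appears. So there is nothing to compare beyond noting that you have supplied the proof the paper chose to omit.
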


\section{Proof of the Main Theorem} \label{sec-proof-main-thm}
\begin{proof}[ {\bf Proof of Theorem~\ref{main-thm1}}] Let $Q = [-1,1]^{2n} \subset {\mathbb R}^{2n}$. The proof is divided into two steps:

\noindent {\bf Step I ($\varepsilon$-net argument):}
Let $L \subset {\mathbb R}^{2n}$ be a complex line, and $e \in S^{2n-1} \cap L$. Note that the vectors $e$ and $Je$ form an orthogonal basis for $L$, and for every $x \in {\mathbb R}^{2n}$ one has
$$ \pi_L(x) = \langle x,e \rangle e + \langle x,Je \rangle Je.$$
Thus, one has
$$ $$ 
\begin{equation} \label{area-diam-ineq}
\begin{split}
    {\rm diam} (\pi_L (UQ)) & = 2 \max_{x \in Q} \sqrt{| \langle Ux,e \rangle|^2 + | \langle Ux,Je \rangle|^2 } \\
 &\geq  \max_{x \in Q} \max \{| \langle x,U^*e \rangle| , | \langle x,U^*Je \rangle| \} \\
 &=  \max \{\|U^*e\|_1 , \| U^*Je \|_1 \}.
\end{split}
\end{equation}
It follows that for every $U \in {\rm O}(2n)$, the minimum over all complex lines 
satisfies
\begin{equation} \label{est-area}
 \min_{L } {\rm diam} (\pi_L (UQ)) \geq  \min_{v \in S^{2n-1}} \max \{  \|v\|_1, \| U^*JUv \|_1 \}.
\end{equation}
Next, for a given constant $\theta>0$, denote $G_\theta := S^{2n-1} \cap \theta \sqrt{n} B_1^{2n}$, and
\begin{equation} \label{small-diam} {\mathcal A}_{\lambda}  := \{ U \in {\rm O}(2n) \, | \, \exists  {\rm \ a \ complex \ line \ } L \subset{\mathbb R}^{2n} \ {\rm with \ }  {\rm diam}(\pi_L (UQ)) \leq {\lambda} \sqrt{n} \}.\end{equation}
Recall that in order to prove Theorem~\ref{main-thm1}, we need to show that there is a constant $\lambda$ for which the measure of $ {\mathcal A}_{\lambda} \subset {\rm O}(2n) $ is exponentially small, a task to which we now turn.
From~$(\ref{est-area})$ it follows that for any $U \in {\mathcal A}_{\lambda}$ 
one has 
$$ G_{\lambda} \cap U^* JU G_{\lambda} \neq \emptyset.$$
Indeed, if $U \in {\mathcal A}_{\lambda}$, then by~$(\ref{area-diam-ineq})$ one has that $\| U^*e \|_1 \leq \lambda \sqrt{n}$ and $\| (U^*JU)U^*e \|_1 \leq \lambda \sqrt{n}$, so $z := U^*e_1 \in G_{\lambda}$ and $U^*JUz \in G_{\lambda}$. 
Hence, we conclude that
$$ {\mathcal A}_{\lambda}  \subseteq \{ U \in {\rm O}(2n) \, | \,  G_{\lambda} \cap U^* JU G_{\lambda} \neq \emptyset \}.$$ 
Next, let ${\mathcal F}$ be a $\delta$-net for $G_{\lambda}$ for some $\delta >0$.  
For any $U \in {\mathcal A}_{\lambda}$ there exists $x \in G_{\lambda} \cap U^* JU G_{\lambda}$, and 
 $y \in {\mathcal F}$ for which $|y - x| \leq \delta$. Thus, one has
$$   \| U^*JUy \|_1 \leq \| U^*JUx \|_1 + \| U^*JU (y-x) \|_1  \leq {\lambda} \sqrt{n} + \sqrt{2n} | U^*JU (y-x) | \leq \sqrt{n}({\lambda} + \sqrt{2}\delta).$$ 
It follows that 
\begin{equation} \label{estimate-special-rotations} {\mathcal A}_{\lambda} 
\subseteq \bigcup_{y \in {\mathcal F}}  \left \{ U \in {\rm O}(2n) \, | \,   U^* JU y \in G_{{\lambda}+ \sqrt{2}\delta} \right \}.  \end{equation}
From~$(\ref{estimate-special-rotations})$ and Proposition~\ref{e-net-prop} from Section~\ref{sec-pre} it follows that for every ${\lambda}>0$
\begin{equation} \label{estimate-special-rotations1}
\begin{split}
\mu({\mathcal A}_{\lambda}) & \leq  \sum_{y \in {\mathcal F}} \mu \{ U \in O(2n) \, | \, U^*JUy \in G_{{\lambda}+\sqrt{2}\delta} \} \\
 & \leq \exp({2\varepsilon n}) \sup_{y \in S^{2n-1}} \mu \{ U \in O(2n) \, | \, U^*JUy \in G_{{\lambda}+\sqrt{2}\delta} \},
\end{split}
\end{equation}
where $ 8 {\frac {\ln(2n)} {2n} } < \varepsilon < {\frac 1 2}$, and $\delta = 8{\lambda} \sqrt{{\frac {\ln(1/ \varepsilon)} {\varepsilon} }} $. 

\noindent {\bf Step II (concentration of measure):} For $y \in S^{2n-1}$ let $\nu_y$ be the push-forward  measure on $S^{2n-1}$ induced by the Haar measure $\mu$ on ${\rm O}(2n)$ through the map $f : O(2n) \rightarrow S^{2n-1}$ defined by $U \mapsto U^*JUy$. Using the measure $\nu_y$, we can rewrite inequality~$(\ref{estimate-special-rotations1})$ as 
\begin{equation} \label{estimate-special-rotations2}  \mu({\mathcal A}_{\lambda}) \leq \exp({2\varepsilon n}) \sup_{y \in S^{2n-1}} \nu_y (G_{{\lambda}+\sqrt{2}\delta}) = \exp({2\varepsilon n}) \sup_{y \in S^{2n-1}} \nu_y  \{ x \in S^{2n-1} \, | \, \|x\|_1 \leq \sqrt{n}({\lambda}+\sqrt{2} \delta)\}. \end{equation} 
Note that if $V \in {\rm O}(2n)$ preserves $y$, i.e., $Vy=y$, then 
$$ V(f(U))=V(U^*JUy) = (UV^*)^* J (UV^*)(Vy)= f(UV^*).$$ Thus, the measure $\nu_y$ is invariant under any rotation in ${\rm O}(2n)$ that preserves $y$. Note also that for any $y \in S^{2n-1}$ one has 
$$ \langle U^*JUy,y \rangle = \langle JUy,Uy \rangle = 0.$$
This means that $\nu_y$ is supported on $S^{2n-1} \cap \{y \}^{\perp}$, and hence we conclude that $\nu_y$ is the standard normalized measure on $S^{2n-1} \cap \{y \}^{\perp}$.

Next, let $S_y = S^{2n-1} \cap \{y\}^{\perp}$. For $x \in S_y$ set $\varphi(x)= \|x\|_1$. Note that $\varphi$ is a Lipschitz function on $S_y$ with Lipschitz constant $\|\varphi\|_{\rm Lip} \leq \sqrt{2n}$. 
Using a concentration of measure argument (see Proposition~\ref{prop-about-concentration} above), we conclude that for any $\alpha >0$ 
\begin{equation} \label{estimate-123} \nu_y \{ x \in S_y \, | \, \varphi(x) < {\mathbb E}_{\nu_y} \varphi - \alpha \sqrt{n} \} \leq C{\rm exp}(-\kappa^2 \alpha^2 n^2 / \| \varphi \|^2_{\rm Lip}) \leq  C{\rm exp} (-\kappa^2 \alpha^2 n), \end{equation}
for some universal constants $C$ and $\kappa$.

Our next step is to estimate the expectation ${\mathbb E}_{\nu_y} \varphi $ that appear in~$(\ref{estimate-123})$. For this purpose let us take some orthogonal basis $\{ z_1,\ldots, z_{2n-1}\}$ of the subspace $L= \{ y\}^{\perp} \subset {\mathbb R}^{2n}$. For $1 \leq j \leq 2n$, denote by $w_j$ the vector $w_j = (z_1(j),\ldots,z_{2n-1}(j))$, where $z_k(j)$ stands for the $j^{\rm th}$ coordinate of the vector $z_k$. Then, the measure $\nu_y$, which is the standard normalized Lebesgue measure on $S^{2n-1} \cap \{y \}^{\perp}$, can be described as the image of the normalized Lebesgue measure $\sigma_{2n-2}$ of $S^{2n-2}$ under the map
$$ S^{2n-2} \ni a = (a_1,\ldots,a_{2n-1}) \mapsto \sum_{k=1}^{2n-1} a_k z_k = (\langle a ,w_1 \rangle, \langle a,w_2 \rangle,\ldots \langle a, w_{2n} \rangle ) \in S_y.$$
Consequently, 
$$ {\mathbb E}_{\nu_y} \varphi  =  {\mathbb E}_{\sigma_{2n-2}} (a \mapsto \sum_{j=1}^{2n} | \langle a,w_j \rangle | ) \geq {\frac 1 {\sqrt {2n-1}} } \sqrt {\frac 2 {\pi}} \sum_{j=1}^{2n} |w_j|.$$
Since $\{z_1,\ldots,z_{2n-1},y\}$ is a basis of ${\mathbb R}^{2n}$, one has that $|w_j|^2 +y_j^2 =1$ and hence 
$$ {\mathbb E}_{\nu_y} \varphi = {\frac 1 {\sqrt{2n-1}}} \sqrt{\frac 2 {\pi}} \sum_{j=1}^{2n} \sqrt{1-y_j^2} \geq  {\frac 1 {\sqrt{2n-1}}} \sqrt{\frac 2 {\pi}} \, (2n-1) \geq {\frac 1 2 }  \sqrt{n}.$$
Thus, from inequality~$(\ref{estimate-123})$ with $\alpha = {\frac 1 4}$ we conclude that
\begin{equation} \label{estimate-1234}  \nu_y \{ x \in S_y \, | \, \varphi(x) < {\frac 1 4} \sqrt{n} \} \leq \nu_y \{ x \in S_y \, | \, \varphi(x) <  {\mathbb E}_{\nu_y} \varphi  - {\frac 1 4} \sqrt{n} \} \leq C{\rm exp} (-{\frac {\kappa^2n} {16}}). \end{equation}
In other words, for any $\theta \leq {\frac 1 4}$ and any $y \in S^{2n-1}$ one has that 
$$ \nu_y (G_\theta) \leq C{\rm exp} (-{\frac {\kappa^2n} {16}}),$$
for some constant $\kappa$. Thus, for every ${\lambda}$ such that $\lambda+\sqrt{2} \delta \leq 1/4$, we conclude by~$(\ref{estimate-special-rotations2})$ that 
$$\mu({\mathcal A}_{\lambda}) \leq C{\rm exp}({2n\varepsilon}) \cdot {\rm exp} \bigl(-{\frac {\kappa^2 n} {16}} \bigr). $$
To complete the proof of the Theorem it is enough to take  $\varepsilon = \kappa^2/64$, and $\lambda$ which satisfies the inequality $\lambda \Bigl (1+16 \sqrt{{\frac {\ln(1/ \varepsilon)} {\varepsilon}}} \Bigr) \leq 1/4$. 
\end{proof}

\section*{Appendix} 

Here we provide some background from symplectic topology which partially served as a motivation for the current paper. For more 
detailed information on symplectic topology we refer the reader e.g., to the books~\cite{HZ,McSa} and the references therein. 

A symplectic vector space is a pair $(V,\omega)$, consisting of a finite-dimensional vector space and a non-degenerate skew-symmetric bilinear form $\omega$, 
called the symplectic structure. 
The group of linear transformations which preserve $\omega$ is denoted by ${\rm Sp}(V,\omega)$. 
The archetypal example of a symplectic vector space is the Euclidean space ${\mathbb R}^{2n}$ equipped with the skew-symmetric bilinear form 
$ \omega$ 
which is the imaginary part of the standard Hermitian inner product in $ {\mathbb R}^{2n} \simeq {\mathbb C}^n$. 
More precisely, if $\{x_1,\ldots,x_n,y_1,\ldots,y_n\}$ stands for the standard basis of $ {\mathbb R}^{2n}$, then $\omega(x_i,x_j) = \omega(y_i,y_j) =0$, and $\omega(x_i,y_j)=\delta_{ij}$. In this case the group of linear symplectomorphisms is usually denoted by ${\rm Sp}(2n)$. 
More generally, the group of diffeomorphisms $\varphi$ of ${\mathbb R}^{2n}$ which preserve the symplectic structure, i.e., when the differential $d \varphi$ at each point is a linear symplectic map, is called the group of symplectomorphisms of ${\mathbb R}^{2n}$, and is denoted by ${\rm Symp}({\mathbb R}^{2n},\omega)$. In the spirit of Klein's Erlangen program, symplectic geometry can be defined as the study of transformations which preserves the symplectic structure. 
We remark that already in the linear case, the geometry of a skew-symmetric bilinear form is very different from that of a symmetric form, e.g., there is no natural notion of distance or angle between two vectors. We further remark that symplectic vector spaces, and more generally symplectic manifolds,     
 provide a natural setting for Hamiltonian dynamics, as the evolution of a Hamiltonian system is known to preserve the symplectic form  (see, e.g.,~\cite{HZ}). Historically, this is the main motivation to study symplectic geometry. 

In sharp contrast with Riemannian geometry where, e.g., curvature is an
obstruction for two manifolds to be locally isometric, in the realm of symplectic geometry it is known that there are no local invariants (Darboux's theorem).
Moreover, unlike the Riemannian setting, a symplectic structure has a very rich group of automorphisms. More precisely, the group of 
symplectomorphisms is
 an infinite-dimensional Lie group.
The first results distinguishing (non-linear) symplectomorphisms from volume preserving transformations were discovered only in the 1980s. 
The most striking difference between the category of volume preserving transformations and the category of symplectomorphisms was demonstrated by Gromov~\cite{Gr} in his famous non-squeezing theorem. This theorem asserts that if $r < 1$, there is no symplectomorphism $\psi$ of ${\mathbb R}^{2n}$ which maps the open unit ball $B^{2n}(1)$ into the open cylinder $Z^{2n}(r) = B^2(r) \times {\mathbb C}^{n-1}$. 
This result paved the way to the introduction of global symplectic
invariants, called symplectic capacities, which are significantly differ from any
volume related invariants, and roughly speaking measure
the symplectic size of a set (see e.g.,~\cite{CHLS}, for the precise definition and further discussion).
%
%
%
%
Two examples, defined for open subsets of ${\mathbb R}^{2n}$, are the Gromov radius  $\underline c({\mathcal U}) = \sup \{ \pi r^2 \, : \, B^{2n}(r) \stackrel{s}{\hookrightarrow} {\mathcal U} \}$,
 and the cylindrical capacity $\overline c({\mathcal U}) = \inf \{ \pi r^2 \, : \, {\mathcal U}  \stackrel{s}{\hookrightarrow} Z^{2n}(r) \}$.  Here $ \stackrel{s}{\hookrightarrow} $  stands for symplectic
embedding.
 
Shortly after Gromov's work~\cite{Gr} many other symplectic capacities were constructed,
reflecting different geometrical and dynamical properties. Nowadays, these invariants play an important role
in symplectic geometry, and their properties, interrelations, and applications to symplectic
topology and Hamiltonian dynamics are intensively studied (see e.g.,~\cite{CHLS}). 
However, in spite of the rapidly accumulating knowledge regarding symplectic capacities, they
are usually notoriously difficult to compute, and there are very few general methods to effectively estimate them, even within the class of convex domains in ${\mathbb R}^{2n}$ (we refer the reader to~\cite{O} for a survey of some known results and open questions regarding symplectic measurements of convex sets in ${\mathbb R}^{2n}$).
In particular, a long standing central question is whether all symplectic capacities coincide on the class of convex bodies in ${\mathbb R}^{2n}$ (see, e.g., Section 5 in~\cite{O}). Recently, the authors proved that for centrally symmetric convex bodies, several symplectic capacities, including the Ekeland-Hofer-Zehnder capacity $c_{_{\rm EHZ}}$, spectral capacities, the cylindrical capacity $\overline c$, and its linearized version $c_{_{{\rm Sp}(2n)}}$ given in~$(\ref{def-linearized-cilindrical-capacity})$, are all equivalent up to an absolute constant.
More precisely,  the following was proved in~\cite{GO}.
\begin{theorem} \label{GO-thm}
For every centrally symmetric convex body $K \subset {\mathbb R}^{2n}$ 
\begin{equation*}
{\frac {1} {\|J\|_{K^{\circ} \rightarrow K}} } \leq c_{_{{\rm EHZ}}}(K) \leq  \overline c(K) \leq   \overline c_{_{{\rm Sp}(2n)}} (K) \leq {\frac {4} {\|J\|_{K^{\circ} \rightarrow K}} } ,
\end{equation*}
where $\|J \|_{K^{\circ} \rightarrow K}$ is the operator norm of the complex structure $J$, when the latter is 
considered as a linear map between the normed spaces $J : ( {\mathbb R}^{2n}, \| \cdot \|_{K^{\circ}} ) \rightarrow  ( {\mathbb R}^{2n}, \| \cdot \|_{K} ).$
\end{theorem}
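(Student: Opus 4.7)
The plan is to establish the chain link by link, writing $\beta := \|J\|_{K^{\circ} \rightarrow K}$ throughout. The two middle inequalities are general facts: $c_{_{{\rm EHZ}}}(K) \leq \overline c(K)$ follows because any symplectic embedding $K \hookrightarrow Z^{2n}(r)$ yields $c_{_{{\rm EHZ}}}(K) \leq c_{_{{\rm EHZ}}}(Z^{2n}(r)) = \pi r^2$ by monotonicity and normalization of $c_{_{{\rm EHZ}}}$; while $\overline c(K) \leq \overline c_{_{{\rm Sp}(2n)}}(K)$ follows from the 2D classification of symplectomorphisms (Moser), which allows one to reshape the planar projection $\pi(SK) \subset L_0$ of area $A$ into the round disk of area $A$ via a symplectomorphism of $L_0$, extended by the identity on the Euclidean (and hence symplectic) complement $L_0^{\perp}$. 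This produces a symplectic embedding $K \hookrightarrow Z^{2n}(\sqrt{A/\pi})$ from any linear symplectic $S$, and taking the infimum over $S$ yields the inequality.

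For the upper bound $\overline c_{_{{\rm Sp}(2n)}}(K) \leq 4/\beta$, I would reformulate the infimum over $S$ as an infimum over symplectic pairs. The projection $\pi(SK) \subset L_0$ is a symmetric planar body contained in the rectangle of widths $2 h_{SK}(e_1)$ and $2 h_{SK}(e_{n+1})$, so its area is bounded by $4 h_K(S^T e_1) h_K(S^T e_{n+1})$. Since $SJS^T = J$, the pair $(v_1, v_2) := (S^T e_1, S^T e_{n+1})$ satisfies $\omega(v_1, v_2) = 1$; conversely every such pair extends to a symplectic basis and hence arises from some symplectic $S$. Therefore
$$\overline c_{_{{\rm Sp}(2n)}}(K) \leq 4 \inf \bigl \{ h_K(v_1) h_K(v_2) \, : \, \omega(v_1, v_2) = 1 \bigr \}.$$
Using $h_K(v) = \|v\|_{K^{\circ}}$ together with the dual-norm identity $\sup_{v_2 \neq 0} \langle Jv_1, v_2 \rangle / \|v_2\|_{K^{\circ}} = \|Jv_1\|_K$, a double-supremum computation yields
$$\sup_{v_1, v_2 \neq 0} \frac{\omega(v_1, v_2)}{h_K(v_1) h_K(v_2)} = \sup_{v_1 \neq 0} \frac{\|Jv_1\|_K}{\|v_1\|_{K^{\circ}}} = \beta,$$
so the infimum equals $1/\beta$ by rescaling, and the upper bound follows.

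For the lower bound $c_{_{{\rm EHZ}}}(K) \geq 1/\beta$, I would invoke the characterization of $c_{_{{\rm EHZ}}}(K)$ as the minimum action $A(\gamma) = T/2$ of a closed characteristic $\gamma$, of period $T$, of the Hamiltonian $H = \tfrac{1}{2} \|\cdot\|_K^2$ on $\partial K$. Writing $g_K(x) := \|x\|_K$ for the gauge, at every $x \in \partial K$ one has $\nabla g_K(x) \in \partial K^{\circ}$, i.e., $\|\nabla g_K(x)\|_{K^{\circ}} = 1$; the definition of $\beta$ then yields the speed bound
$$\|\dot\gamma(t)\|_K = \|J \nabla g_K(\gamma(t))\|_K \leq \beta \|\nabla g_K(\gamma(t))\|_{K^{\circ}} = \beta.$$
By the $\mathbb{Z}/2$-equivariant Ekeland-Hofer variational theory for centrally symmetric convex bodies, the minimum-action closed characteristic can be chosen symmetric in the sense that $\gamma(t+T/2) = -\gamma(t)$; then the $K$-length of the half-orbit from $\gamma(0)$ to $\gamma(T/2) = -\gamma(0)$ is at least the straight $K$-distance $\|2\gamma(0)\|_K = 2$. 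Combining with the speed bound gives $\beta \cdot (T/2) \geq 2$, whence $c_{_{{\rm EHZ}}}(K) = T/2 \geq 2/\beta \geq 1/\beta$.

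The main obstacle lies in this last link, specifically in invoking the antipodal symmetry of the minimum-action closed characteristic on a centrally symmetric convex body. For smooth strictly convex $K$ this follows from the equivariant min-max theory applied to the Hamiltonian action functional (Ekeland-Hofer); general $K$ is handled by approximation, using the continuity of both $\beta$ and $c_{_{{\rm EHZ}}}$ under Hausdorff convergence of symmetric convex bodies.
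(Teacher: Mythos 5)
The statement you are proving is not proved in this paper at all: it is quoted verbatim from the authors' earlier work [GO], so the only comparison available is with your own argument on its merits. Three of your four links are fine and essentially standard. The inequality $c_{_{\rm EHZ}}(K)\le \overline c(K)$ by monotonicity and normalization, the inequality $\overline c(K)\le \overline c_{_{{\rm Sp}(2n)}}(K)$ by flattening the planar shadow with a two-dimensional area-preserving map extended by the identity on the ($J$-invariant, hence symplectic) orthogonal complement, and the bound $\overline c_{_{{\rm Sp}(2n)}}(K)\le 4/\|J\|_{K^{\circ}\to K}$ via the rectangle estimate ${\rm Area}(\pi(SK))\le 4h_K(S^Te_1)h_K(S^Te_{n+1})$, the identification of pairs $S^Te_1,S^Te_{n+1}$ with pairs $\omega(v_1,v_2)=1$, and the duality computation $\sup \omega(v_1,v_2)/(h_K(v_1)h_K(v_2))=\|J\|_{K^{\circ}\to K}$ are all correct.

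The genuine gap is in the leftmost inequality. Your speed bound $\|\dot\gamma\|_K\le \|J\|_{K^{\circ}\to K}$ is fine, but the whole argument hinges on the assertion that the \emph{minimal-action} closed characteristic on $\partial K$ can be chosen antipodally symmetric, $\gamma(t+T/2)=-\gamma(t)$, for every centrally symmetric convex $K$. This is not a consequence of ``$\mathbb{Z}/2$-equivariant Ekeland--Hofer theory'': restricting the (even) dual action functional to the symmetric loop space and applying symmetric criticality produces \emph{some} symmetric closed characteristic, but its action is a min-max/critical value of the restricted problem and is a priori only $\ge c_{_{\rm EHZ}}(K)$; minimizers of even functionals need not lie in the fixed-point set (think of $(x^2-1)^2$). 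To my knowledge, whether the action-minimizing closed characteristic of a centrally symmetric convex body can be taken symmetric is an open problem, not a citable theorem, and without it your argument proves nothing: a non-symmetric minimizer could a priori be a short loop near a single boundary point, with small $K$-length and small action, so no diameter-type bound on the half-orbit is available. The fact that your route would yield the stronger constant $2/\|J\|_{K^{\circ}\to K}$, twice the one stated in the theorem, is itself a warning sign that the load-bearing step is not free. The smoothing/approximation remark at the end is harmless but irrelevant to this issue; to close the argument you need a proof of the lower bound that does not pass through symmetry of the minimizing orbit (as is done in [GO], to which the paper defers).
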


Theorem~\ref{GO-thm} implies, in particular, that despite the non-linear nature of the Ekeland-Hofer-Zehnder capacity $c_{_{\rm EHZ}}$, and  the cylindrical capacity $\overline c$ (both, by definition, are invariant under non-linear symplectomorphisms), for centrally symmetric convex bodies they are asymptotically equivalent to a linear invariant: the linearized cylindrical capacity $\overline c_{_{{\rm Sp}(2n)}} $. 
Motivated by the comparison between the capacities $\overline c$ and $\overline c_{_{{\rm Sp}(2n)}}$ in Theorem~\ref{GO-thm}, it is natural to introduce and study 
 the following geometric quantity:
\begin{equation} \label{new-geom-quant} \overline c_{_{{\rm G}}}({K})  = \inf_{g \in {\rm G}} {\rm Area} \bigl (\pi( g( {K})) \bigr ), \end{equation}
where $K$ lies in the class of convex domains of ${\mathbb R}^{2n} \simeq {\mathbb C}^n$ (or possibly, some other class of bodies), $\pi$ is the orthogonal projection to the complex line $E = \{ z \in {\mathbb C}^n \, | \,  z_j = 0 \ {\rm for} \ j \neq 1  \}$, and  $G$ is some group of transformations of ${\mathbb R}^{2n}$.
%
%
One possible choice  is to take the group $G$ in~$(\ref{new-geom-quant})$ to be the unitary group ${\rm U}(n)$, which is the maximal compact subgroup of ${\rm Sp}(2n)$.
In this case it is not hard to check (by looking at linear symplectic images of the cylinder $Z^{2n}(1)$) that the cylindrical capacity $\overline c$ is not asymptotically equivalent to $ \overline c_{_{{\rm U}(n)}}$.
 Still, one can ask if these two quantities are asymptotically equivalent on average. More precisely, 

\begin{ques} \label{Sp-vs-U} Is it true that for every convex body $K \subset {\mathbb R}^{2n}$ one has
$$ {\mathbb E}_{\mu} \left (\overline c(OK) \right ) \asymp  {\mathbb E}_{\mu} \bigl( \overline c_{_{{\rm U}(n)}} (OK) \bigr) \, ?,$$
where $\mu$ is the 
Haar measure on the orthogonal group ${\rm O}(2n)$.
\end{ques}

The answer to Question~\ref{Sp-vs-U} is negative. A counterexample is given by the standard cube $Q = [-1,1]^{2n}$ in ${\mathbb R}^{2n}$.
We remark that the quantity $ {\mathbb E}_{\mu} \bigl( \overline c_{_{{\rm U}(n)}} (OQ) \bigr) $ is the main objects of interest of the current paper. 
To be more precise, we turn now to the following proposition, which is a direct corollary of Theorem~\ref{GO-thm}, and might be of independent interest. 
For completeness, we shall give a proof below.

\begin{proposition} \label{HZ-capacity-of-rotated cube}
For the standard cube $Q = [-1,1]^{2n} \subset {\mathbb R}^{2n}$ one has 
\begin{equation*} \label{expectation-capacities}
{\mathbb E}_{\mu} \left ( c_{_{\rm EHZ}}(OQ) \right ) \asymp {\mathbb E}_{\mu} \left ( \overline c(OQ) \right ) \asymp {\mathbb E}_{\mu} \left ( \overline c_{_{{\rm Sp}(2n)}}(OQ) \right )  \asymp \sqrt{{\frac n {\ln n}}}, 
\end{equation*} 
where $\mu$ is the 
Haar measure on the orthogonal group ${\rm O}(2n)$.
\end{proposition}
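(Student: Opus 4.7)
The plan is to reduce Proposition~\ref{HZ-capacity-of-rotated cube} to a single estimate on the random scalar $\|J\|_{(OQ)^\circ \to OQ}$, and then to analyze this quantity by studying the random matrix $A := O^*JO$. By Theorem~\ref{GO-thm}, for every $O \in {\rm O}(2n)$ the three capacities $c_{_{\rm EHZ}}(OQ)$, $\overline c(OQ)$, $\overline c_{_{{\rm Sp}(2n)}}(OQ)$ are mutually comparable up to universal constants and are all $\asymp 1/\|J\|_{(OQ)^\circ \to OQ}$. It therefore suffices to prove
$$ {\mathbb E}_\mu \bigl [ 1/ \|J\|_{(OQ)^\circ \to OQ} \bigr ] \asymp \sqrt{n/\ln n}.$$

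Since $Q^\circ = B_1^{2n}$ and $OQ = \{w : \|O^*w\|_\infty \le 1\}$, the substitution $u = O^* y$ gives
$$ \|J\|_{(OQ)^\circ \to OQ} = \sup_{\|u\|_1 \le 1} \|A u\|_\infty = \max_{1 \le i,j \le 2n} |A_{ij}|, $$
where $A = O^*JO$ is skew-symmetric and orthogonal, with entries $A_{ij} = \langle Oe_i, J Oe_j \rangle$. Since $\|A\|_F^2 = {\rm tr}(A^*A) = 2n$, a pigeonhole bound on the $4n^2$ entries yields the deterministic lower bound $\max_{i,j}|A_{ij}| \ge 1/\sqrt{2n}$, and hence $1/\|J\|_{(OQ)^\circ \to OQ} \le \sqrt{2n}$ everywhere on ${\rm O}(2n)$. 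The problem thus reduces to showing $\max_{i,j}|A_{ij}| \asymp \sqrt{\ln n/n}$ with overwhelming probability.

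For the upper bound on $\max_{i,j}|A_{ij}|$, I would observe that for any fixed $i\ne j$, conditioning on $v_i := Oe_i$ makes $v_j := Oe_j$ uniformly distributed on the unit sphere of $v_i^\perp$, while $Jv_i$ is a fixed unit vector in $v_i^\perp$ (as $J$ is skew). Thus $A_{ij} = \langle Jv_i, v_j\rangle$ is distributed as a single coordinate of a uniform point on $S^{2n-2}$, which is subgaussian with parameter $O(1/\sqrt n)$. A union bound over the $O(n^2)$ pairs then gives $\max_{i,j}|A_{ij}| \le C\sqrt{\ln n/n}$ outside a set of Haar measure $\exp(-c n)$.

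The matching lower bound is the main obstacle, since the $4n^2$ entries of $A$ are strongly correlated and a direct union bound can give no independence. I would handle this by conditioning on the single column $v_1$ and noting that $(v_2,\ldots,v_{2n})$ is a Haar-random orthonormal basis of $v_1^\perp \cong {\mathbb R}^{2n-1}$. The row $(A_{1,j})_{j=2}^{2n}$ then expresses the fixed unit vector $Jv_1 \in v_1^\perp$ in this uniform random basis, and is therefore distributed uniformly on $S^{2n-2}$. Classical lower-tail estimates for the $\ell_\infty$-norm of a uniform point on a high-dimensional sphere (obtained from the Gaussian realization $Z = g/|g|$ together with standard extreme-value bounds on ${\rm i.i.d.}$ Gaussians) yield
$$ \mu\Bigl(\max_{j\ge 2}|A_{1,j}| < c\sqrt{\ln n/n}\Bigr) \le \exp(-n^{\alpha}) $$
for some $\alpha >0$, and a fortiori $\max_{i,j}|A_{ij}| \ge c\sqrt{\ln n/n}$ on the complementary event. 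Combining the two high-probability estimates with the deterministic bound $1/\max_{i,j}|A_{ij}| \le \sqrt{2n}$ on the exceptional set then yields $ {\mathbb E}_\mu [1/\max_{i,j}|A_{ij}|] \asymp \sqrt{n/\ln n}$, completing the proof.
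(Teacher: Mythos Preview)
Your argument is correct and matches the paper's: both reduce via Theorem~\ref{GO-thm} to ${\mathbb E}_\mu\bigl[1/\max_{i,j}|(O^*JO)_{ij}|\bigr]$, use the deterministic bound $\max_{i,j}|(O^*JO)_{ij}|\ge 1/\sqrt{2n}$ on the exceptional set, and analyze the distribution by observing that (after fixing one column $v_1=Oe_1$) the relevant vector is uniform on the sphere $S^{2n-2}$ in $v_1^\perp$, where the behavior of $\|\cdot\|_\infty$ is classical. One quantitative slip: the union bound over $O(n^2)$ subgaussian entries at threshold $C\sqrt{\ln n/n}$ yields only polynomial decay $n^{-M}$ (for $C$ large), not $\exp(-cn)$ --- this is harmless for the expectation estimate, and indeed the paper records only a bound of order $c_4/n$ for the lower-tail probability.
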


Note that the combination of the main result of the current paper (in particular, Corollary~\ref{Cor-rot-cube}) with 
 Proposition~\ref{HZ-capacity-of-rotated cube} above gives a negative answer to Question~\ref{Sp-vs-U}, and thus further emphasizes the difference between the symplectic and complex structures on ${\mathbb R}^{2n} \simeq {\mathbb C}^n$.

\begin{proof}[{\bf Proof of Proposition~\ref{HZ-capacity-of-rotated cube}}]
Note that by definition one has that 
$$ \| J \|_{(OQ)^{\circ} \rightarrow (OQ)}  =  \max_{x \in (OQ)^{\circ}} \|Jx\|_{OQ}  = \max_{x \in B_1^{2n}} \|O^*JOx\|_{{\infty}}   =  \max_{i=1,\ldots,2n}  \|O^*JOe_i \|_{{\infty}},
$$
where $\{e_i\}_{i=1}^{2n}$ stands for the standard basis of ${\mathbb R}^{2n}$.
It follows from Step II of the proof of Theorem~\ref{main-thm1} above that for a random rotation $O \in {\rm O}(2n)$, the vector $O^*JOe_i$ is uniformly distributed on $S^{2n-2} \simeq S^{2n-1}  \cap \{e_i \}^{\perp}$ with respect to the standard normalized measure  ${\sigma_{_{2n-2}}}$ on $S^{2n-2}$. 
The distribution of the $l_{\infty}^k$-norm on the sphere $S^{k-1}$ is well-studied, and in particular one has  (see e.g., Sections 5.7 and 7 in~\cite{MS}) that for every $e_i$ 
\begin{equation} \label{estimate-on-expectation} {\mathbb E}_{\mu} \left (   \|(O^*JOe_i)\|_{\infty} \right ) \asymp \sqrt{{\tfrac {\ln n} {n}}}, \end{equation} and 
\begin{equation} \label{estimate-on-probability} {\mathbb P}_{\mu} \left \{   (  \|(O^*JOe_i)\|_{\infty} -  {\mathbb E}_{\mu} \left (   \|(O^*JOe_i)\|_{\infty} \right ) > t  \right \}  \leq c_1 {\rm exp}(-c_2 t^2n), \end{equation}
for some universal constants $c_1,c_2 >0$.
From~$(\ref{estimate-on-expectation})$ and~$(\ref{estimate-on-probability})$ it immediately follows that 
\begin{equation} \label{expectation-of-norm-J} {\mathbb E}_{\mu} \left (   \| J \|_{(OQ)^{\circ} \rightarrow (OQ)} \right ) \asymp \sqrt{{\tfrac {\ln n} {n}}}. \end{equation}
Moreover, one has that for some universal constants $c_3,c_4 >0$,
\begin{equation} \label{estimate-on-probability-of-norm-J} {\mathbb P}_{\mu} \bigl \{  (  \| J \|_{(OQ)^{\circ} \rightarrow (OQ)}  \leq  c_3 \sqrt{{\tfrac {\ln n} {n}}}   \bigr \}  \leq {\frac {c_4} n}. \end{equation}
Indeed, from the above it follows that
\begin{equation*} 
{\mathbb P}_{\mu} \bigl \{    \| J \|_{(OQ)^{\circ} \rightarrow (OQ)}  \leq  t   \bigr \}    \leq  {\mathbb P}_{\mu} \bigl \{  (  \|(O^*JOe_1)\|_{\infty} \leq  t   \bigr \}  = {\mathbb P}_{\sigma_{_{2n-2}}} \bigl \{    \| v \|_{\infty} \leq  t   \bigr \}.  
\end{equation*}
Using the standard Gaussian probability measure ${\gamma_{_{2n-1}}}$ on ${\mathbb R}^{2n-1}$, one can further estimate 
\begin{eqnarray*} 
 {\mathbb P}_{\sigma_{_{2n-2}}} \bigl \{    \| v \|_{\infty} \leq  t   \bigr \}  & = & {\gamma_{_{2n-1}}} \bigl \{    \| g \|_{\infty} \leq  t \|g\|_2   \bigr \}  \\ & \leq & {\gamma_{_{2n-1}}} \bigl \{    \| g \|_{\infty} \leq   2 {\sqrt{2n-1}}   t    \bigr \}  + {\gamma_{_{2n-1}}} \bigl \{    \| g \|_{2} \geq 2 {{\sqrt{2n-1}}}   \bigr \},
\end{eqnarray*}
where $g$ is a Gaussian vector in ${\mathbb R}^{2n-1}$ with independent standard Gaussian coordinates. One can directly check that~$(\ref{estimate-on-probability-of-norm-J})$ now follows from the above inequalities, and the following standard estimates for the Gaussian probability measure $\gamma_{_{k}}$ on ${\mathbb R}^{k}$, and $0< \varepsilon <1$: $$\gamma_{_{k}} \bigl \{    \| g \|_{\infty} \leq  \alpha  \bigr \} \leq [1- {\sqrt{\tfrac {2} {\pi} }} {\tfrac {{\rm exp}(-\alpha^2/2)} {\alpha} } ]^k, \ {\rm and} \ \gamma_{_{k}} \left  \{ x \in {\mathbb R}^k \, | \,  \|g\|_2^2 \geq {\tfrac {k} {(1-\varepsilon)}} \right \} \leq {\rm exp}(-\varepsilon^2k/4).$$
Taking into account the fact that $  {\frac 1 {\sqrt {2n}}} \leq  \| J \|_{(OQ)^{\circ} \rightarrow (OQ)} \leq 1$, we conclude from~$(\ref{expectation-of-norm-J})$ and~$(\ref{estimate-on-probability-of-norm-J})$ above that
\begin{equation*} \label{expectation-of-1-over-norm-J}   {\mathbb E}_{\mu} \bigl ( (  {  \| J \|_{(OQ)^{\circ} \rightarrow (OQ)}} )^{-1} \bigr) 
\asymp \sqrt{{\tfrac {n} {\ln n}}}. \end{equation*}
Together with Theorem~\ref{GO-thm}, this completes the proof of Proposition~\ref{HZ-capacity-of-rotated cube}. 
\end{proof}


 \noindent
Efim D. Gluskin \\
School of Mathematical Sciences \\
Tel Aviv University, Tel Aviv 69978, Israel  \\
{\it e-mail}: gluskin@post.tau.ac.il \\

\vskip -5pt

 \noindent
Yaron Ostrover \\
School of Mathematical Sciences \\
Tel Aviv University, Tel Aviv 69978, Israel  \\
{\it e-mail}: ostrover@post.tau.ac.il \\

\end{document}